\documentclass[12pt,a4]{amsart}

\setlength{\parskip}{2mm}

  \headheight0.6in
    \headsep22pt
    \textheight23cm
    \topmargin-1.7cm
    \oddsidemargin 0.5cm
    \evensidemargin0.5cm
    \textwidth15cm

\usepackage{todonotes}
\usepackage{mathabx} 
\usepackage{amssymb}
\usepackage[colorlinks]{hyperref}
\usepackage{amsrefs}

\newtheorem{lemma}{Lemma}
\newtheorem*{theorem}{Theorem}

\newcommand{\tr}{\mathrm{tr}}
\renewcommand{\k}{\mathbb{F}} 
\newcommand{\dd}{\mathrm{d}}
 
\newcommand{\act}
{\smalltriangleright}
\newcommand\cO{\mathcal{O}}
\newcommand\cA{\mathcal{A}}
\newcommand{\casimir}{\mathcal{C}}

\begin{document}
\title[Hochschild dimension of liberated quantum groups]{A remark on
the Hochschild 
dimension of 
liberated quantum groups}
\author[Brzezi\'nski]{Tomasz Brzezi\'nski}
\address{
Department of Mathematics, Swansea University, 
Swansea University Bay Campus,
Fabian Way,
  Swansea SA1 8EN, U.K.\ \newline \indent
Faculty of Mathematics, University of Bia{\l}ystok, K.\ Cio{\l}kowskiego  1M,
15-245 Bia\-{\l}ys\-tok, Poland}
\email{T.Brzezinski@swansea.ac.uk}
\author[Kr\"ahmer]{Ulrich Kr\"ahmer}
\address{Institut f\"{u}r Geometrie, TU Dresden,  
Dresden, Germany}
\email{ulrich.kraehmer@tu-dresden.de}
\author[\'O Buachalla]{R\'eamonn \'O Buachalla}
\address{Mathematical Institute of Charles University, Sokolovsk\'a 83, Prague, Czech Republic}
\email{obuachalla@karlin.mff.cuni.cz}
\author[K. R. Strung]{Karen R. Strung}
\address{Institute of Mathematics, Czech Academy of Sciences,
\v{Z}itn\'a 25, 115 67 Prague, Czech Republic}
\email{strung@math.cas.cz}

\begin{abstract}
Let $A$ be a Hopf algebra
equipped with a projection
onto the coordinate Hopf
algebra $\mathcal{O}(G)$ of a
semisimple algebraic group $G$.
It is shown that if $A$ admits
a suitably non-degenerate
comodule $V$ and the induced
$G$-module structure of $V$ is
non-trivial, then the third
Hochschild homology group of
$A$ is non-trivial.
\end{abstract}
\maketitle


\section{Introduction}

For a field $\k$, let $\cO(G)$
denote the Hopf algebra of
coordinate (polynomial)
functions on an algebraic
group $G$. Let furthermore
$HH_*(A)$ denote the 
Hochschild homology of an
associative (unital) algebra
$A$ over $\k$ with
coefficients in $A$. In this note we
prove the following:

\begin{theorem}
Let $G$ be a semisimple algebraic
group over a field $\k$ of
characteristic 0,  
$\pi \colon A \longrightarrow
\cO(G)$ be a Hopf algebra map,
and $V$ be a right
$A$-comodule with 
a non-degenerate symmetric or
antisymmetric invariant
bilinear form.
If the representation of $G$ 
on $V$ induced by $ \pi $ is
nont-rivial, then 
$
	HH_3(A) \neq 0.
$ 
\end{theorem}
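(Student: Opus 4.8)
The plan is to produce an explicit nonzero class in $HH_3(A)$ out of the coaction matrix of $V$, and to detect it by transporting it along $\pi$ to $\cO(G)$, where Hochschild--Kostant--Rosenberg identifies it with the bi-invariant $3$-form built from the trace form of the induced $G$-representation; the non-vanishing of that form --- the classical fact that a nontrivial representation of a semisimple Lie algebra in characteristic zero has nonzero trace form --- is the heart of the matter. We may assume $V$ is finite dimensional, of dimension $n$, and write its coaction as $e_j \mapsto \sum_i e_i \otimes a_{ij}$; then $a = (a_{ij}) \in M_n(A)$ satisfies $\varepsilon(a_{ij}) = \delta_{ij}$, $\Delta(a_{ij}) = \sum_k a_{ik} \otimes a_{kj}$, and, by non-degeneracy and invariance of the bilinear form, $a$ is invertible with $a^{-1} = B^{-1} a^{\mathrm t} B$ where $B$ is the Gram matrix of the form (equivalently, $a^{-1}$ is supplied by the antipode of $A$). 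Applying $\pi$ entrywise, $u := \pi(a) \in GL_n(\cO(G))$ is the matrix of coordinate functions of the rational representation $\rho \colon G \to GL(V)$ induced by $\pi$, which is nontrivial by hypothesis.

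To the invertible matrix $a$ one attaches, by a classical construction, an explicit Hochschild $3$-cycle $\mathrm{cs}(a) \in A^{\otimes 4}$ --- the algebraic version of the Chern--Simons $3$-form $\tr\big((g^{-1}\dd g)^{\wedge 3}\big)$, written as a suitable alternating combination of tensor products of entries of $a$ and of $a^{-1}$, the relation $b\,\mathrm{cs}(a) = 0$ being a direct computation using only $a a^{-1} = a^{-1} a = I$. This construction is natural in the algebra, so the chain map induced by $\pi$ carries $\mathrm{cs}(a)$ to $\mathrm{cs}(u)$, and $HH_3(\pi)$ therefore sends the class $[\mathrm{cs}(a)]$ to $[\mathrm{cs}(u)] \in HH_3(\cO(G))$.

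Since $\k$ has characteristic zero, $G$ is smooth, so Hochschild--Kostant--Rosenberg identifies $HH_3(\cO(G))$ with the module $\Omega^3(G)$ of algebraic $3$-forms, and under this identification $[\mathrm{cs}(u)]$ corresponds to a nonzero scalar multiple of $\tr\big((u^{-1}\dd u)^{\wedge 3}\big)$. The matrix $u^{-1}\dd u$ is the left-invariant Maurer--Cartan form of $G$ read in the representation $V$, so $\tr\big((u^{-1}\dd u)^{\wedge 3}\big)$ is left invariant, hence determined by its value at the identity, which is a nonzero multiple of the alternating form $\omega_\rho \in \Lambda^3 \mathfrak g^*$, $\mathfrak g = \mathrm{Lie}(G)$, given by $\omega_\rho(X,Y,Z) = \tr_V\big(\dd\rho([X,Y])\,\dd\rho(Z)\big)$. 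It remains to show $\omega_\rho \neq 0$. The symmetric bilinear form $\beta_\rho(X,Z) = \tr_V\big(\dd\rho(X)\,\dd\rho(Z)\big)$ on $\mathfrak g$ is $\mathrm{ad}$-invariant; decomposing $\mathfrak g = \bigoplus_i \mathfrak g_i$ into simple ideals, nontriviality of $\rho$ forces $\rho$ to be nontrivial, hence faithful, on some $\mathfrak g_i$, and then $\beta_\rho$ restricted to $\mathfrak g_i$ is a nonzero multiple of the Killing form $\kappa_i$ (the space of invariant bilinear forms on a simple Lie algebra is one-dimensional, and $\tr_V\big(\dd\rho(h)^2\big) = \sum_\lambda (\dim V_\lambda)\,\lambda(h)^2$ is a strictly positive integer for a suitable coroot $h$ of $\mathfrak g_i$, which is where characteristic zero enters). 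Hence $\omega_\rho$ restricted to $\Lambda^3 \mathfrak g_i$ is a nonzero multiple of the Cartan form $\kappa_i\big([\,\cdot\,,\,\cdot\,],\,\cdot\,\big)$, and the latter is nonzero because $\kappa_i$ is non-degenerate (for a root $\alpha$ one has $\kappa_i([e_\alpha,f_\alpha],h_\alpha) = \kappa_i(h_\alpha,h_\alpha) \neq 0$). Therefore $\omega_\rho \neq 0$, so $[\mathrm{cs}(u)] \neq 0$ in $HH_3(\cO(G))$, so $[\mathrm{cs}(a)] \neq 0$ in $HH_3(A)$, and $HH_3(A) \neq 0$.

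The main obstacle I anticipate is the second step: writing $\mathrm{cs}(a)$ down precisely as an honest Hochschild cycle over the noncommutative algebra $A$ --- the naive alternating combination of $a$ and $a^{-1}$ either fails to be a cycle or has vanishing image under Hochschild--Kostant--Rosenberg, so one must use the genuine Chern--Simons representative with its correction terms --- and verifying that its image is $\tr\big((u^{-1}\dd u)^{\wedge 3}\big)$ with a nonzero coefficient; the remaining ingredients (HKR for smooth affine algebras, invariant forms on semisimple Lie algebras) are standard. One may also package the outer steps through the isomorphism $HH_*(A,A) \cong \mathrm{Tor}^A_*(\k, {}^{\mathrm{ad}}A)$ valid for any Hopf algebra, together with the splitting of $\k$ off the adjoint module $ {}^{\mathrm{ad}}A$: this reduces the claim to $\mathrm{Tor}^A_3(\k,\k) \neq 0$ and lets one detect the same cycle in $\mathrm{Tor}^{\cO(G)}_3(\k,\k) \cong \Lambda^3\mathfrak g^*$, which makes transparent why it is precisely the third Hochschild group that is forced to be nonzero.
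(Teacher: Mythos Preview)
Your outline is essentially sound and lands on the same cycle as the paper, but there is one genuine inaccuracy and one genuine methodological difference worth flagging.

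\textbf{The cycle.} Your claim that ``$b\,\mathrm{cs}(a)=0$ is a direct computation using only $aa^{-1}=a^{-1}a=I$'' is not correct, and this is precisely where the symmetric/antisymmetric hypothesis enters. The paper writes the cycle explicitly as
\[
c_V=\sum_{ijkl}(v^{-1})_{ji}\otimes v_{ik}\otimes(v^{-1})_{kl}\otimes v_{lj}
+\sum_{ij}1\otimes v_{ij}\otimes 1\otimes(v^{-1})_{ji},
\]
and a direct computation gives
$b_3 c_V=\sum_{ij}1\otimes\bigl((v^{-1})_{ij}\otimes v_{ji}-v_{ij}\otimes(v^{-1})_{ji}\bigr)$,
which vanishes \emph{only} after substituting $v^{-1}=E^{-1}v^TE$ with $E^T=\pm E$. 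In other words, for a general invertible $a$ the Chern--Connes element is a cyclic cycle landing in $HC_3$, and the self-duality hypothesis is exactly what lifts it to $HH_3$. You partially acknowledge this in your ``obstacle'' paragraph, but the earlier sentence should be corrected: invertibility alone is not enough, and there is no further ``correction term'' that fixes this without the symmetry of $E$.

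\textbf{Detection.} Here you diverge from the paper. You push $c_V$ forward along $\pi$ to $\cO(G)$, invoke HKR, and identify the image with $\tr((u^{-1}\dd u)^{\wedge 3})$, then argue nonvanishing via the trace form on $\mathfrak g$. This is correct and conceptually clean. The paper instead stays inside $A$: it pulls primitive elements $f_1,f_2,f_3\in\mathfrak g$ back along $\pi^*$ to derivations of $A$, cups them to a class $[\varphi]\in H^3(A,A)$, and computes the cap product $c_V\smallfrown\varphi=-\tr(F_1F_2F_3)\cdot 1_A\in HH_0(A)$ directly; summing over a decomposition of the Casimir as $\Omega=\sum_m f_{m1}[f_{m2},f_{m3}]$ (possible since $[\mathfrak g,\mathfrak g]=\mathfrak g$) yields $-\tfrac12\tr_V(\Omega)\neq 0$. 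The paper's route avoids HKR and smoothness of $G$ entirely, using only the elementary fact that $\varepsilon$ kills commutators, so $1_A\neq 0$ in $HH_0(A)$. Your route makes the geometric meaning (the Cartan $3$-form) more visible and explains \emph{which} class in $HH_3(\cO(G))$ one is hitting. Both reduce the nonvanishing to the same Lie-algebraic fact: $\tr_V(XYZ)$ is not identically zero on $\mathfrak g$ when the representation is nontrivial.
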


This theorem is best seen in the context of the {\em liberation}
 procedure \cite{BanSpe:lib}
for compact quantum matrix
groups in the sense of
Woronowicz \cite{Wor:com}.
Although this procedure is not formally defined,
 its origins can be traced
back to the work of Wang
\cite{Wan:fre} on free quantum
groups or even earlier to
\cite{DubLau:deg}. At the
algebraic level, the idea is
to construct for a given
representation $V$ of an
algebraic group $G$ and a
non-degenerate bilinear
form on $V$ a universal 
Hopf algebra map 
$\pi\colon \cA(G) \longrightarrow
\cO(G)$ as in the above
theorem, see 
e.g.~\cite[Theorem~1]{BicDub:gro}.
Following this philosophy,
Wang constructed free quantum
orthogonal  and unitary groups
$A_o(N)$, $A_u(N)$ and
interpreted the $C^*$-algebra
completions in terms of a free product of $C^*$-algebras in \cite{Wan:fre}. The former is a universal $C^*$-algebra generated by $N^2$ elements $a_{ij}$ subject to relations
$$
\sum_{k} a_{ik}a_{jk} = \sum_{k} a_{ki}a_{kj} = \delta_{ij}, \qquad a_{ij}^* = a_{ij}.
$$
Collins, H\"artel and Thom
\cite{CHT} studied the Hochschild
homology of $A_o(N)$ showing
that for all $N \geq 2$ the
third Hochschild homology
group with coefficients in $
\mathbb{C} $ is
one-dimensional and 
that $A_o(N)$ is a Calabi--Yau
algebra of
dimension 3 (the 
homology groups with arbitrary
coefficients vanish in degrees
above 3 and 
satisfy Poincar\'e duality
in the sense of Van den Bergh
\cite{Van:rel}). 
Our theorem shows that this
non-triviality of third
Hochschild homology groups 
has a general
representation-theoretic
explanation.

The liberation procedure can
be extended to intermediate
phases leading, for example,
to half-liberated matrix
quantum groups
\cite{BanSpe:lib} or
half-commutative Hopf algebras
\cite{BicDub:hal}; the theorem
can be applied to these
examples, too.

The proof of the theorem uses elementary
noncommutative geometry: by
choosing a basis
$e_1,\ldots,e_N$ in an
$N$-dimensional comodule
over a Hopf algebra
$A$, one obtains an
invertible matrix 
$v \in GL_N(A)$ with 
$ \rho (e_j) = \sum_i e_i
\otimes v_{ij}$ and hence a
class  
$[v] \in K_1(A)$. The Chern--Connes
character assigns to 
$[v]$ 
classes in the odd cyclic homology 
groups $HC_{2d+1}(A)$. The
main point is that assuming
the existence of a symmetric
or antisymmetric
non-degenerate invariant 
pairing on $V$, 
the class in the cyclic homology group $HC_3(A)$ is in
the image of the natural map 
$HH_3(A) \longrightarrow
HC_3(A)$ (Lemma~\ref{natural}). 
Under $ \pi _*$,
these classes in the K-theory
respectively~cyclic and Hochschild
homology of $\cO(G)$ are 
well-known to be non-trivial
(see the final
Section~\ref{masoud}),
hence the theorem follows.

\section{Preliminaries}
In this section we fix
notation and terminology on
Hopf algebras and homological
algebra. 
All the material is standard,
see e.g.~\cite{sweedler}
respectively
\cite{cartaneilenberg} for
more background and details. 
The theory of 
self-dual comodules
is a slightly more
specialised topic, 
hence we include more details
here.

\subsection{The 
comodule $V$}
Let $A$ be a Hopf
algebra with coproduct
$$ 
	\Delta \colon A \rightarrow
	A \otimes A,
	\quad 
	a \mapsto a_{(1)} 
	\otimes a_{(2)},
$$
counit $
\varepsilon \colon A
\rightarrow \k$, and antipode 
$S \colon A \rightarrow A$ 
over a field $\k$, and let 
$V$ be an $N$-dimensional 
right $A$-comodule with
coaction 
$$ 
	\rho \colon V
	\rightarrow V \otimes A,
	\quad
	e \mapsto e_{(0)} 
	\otimes e_{(1)}.
$$

We fix a vector space basis 
$\{e_1,\ldots,e_N\}$ of $V$
and denote by $\{v_{ij}\}$ the
matrix coefficients of $V$
with respect to this basis,
$$
	\rho (e_j) =  
	\sum_i e_i \otimes v_{ij}.
$$
Then we have
\begin{equation}\label{corep}
	\Delta (v_{ij}) =
	\sum_k v_{ik} \otimes 
	v_{kj},\quad
	\varepsilon (v_{ij}) =
\delta _{ij},
\end{equation}
and the matrix $v \in
M_N(A)$ with entries $v_{ij}$
is invertible with inverse
matrix $v^{-1}$ having the 
$ij$-entry $S(v_{ij})$,
$$
	\sum_k S(v_{ik})v_{kj} = 
	\sum_k v_{ik} S(v_{kj}) = 
	\varepsilon (v_{ij}) = 
	\delta _{ij}.   
$$

\subsection{The pairing 
$ \langle -,- \rangle$}
The comodule $V$ is {\em 
self-dual} if there is a
non-degenerate bilinear
form 
$$
	\langle -,- \rangle 
	\colon V \otimes V
\rightarrow \k ,
$$
which is a morphism of
$A$-comodules, where 
$\k$ carries the trivial
coaction 
$$
	\k 
	\rightarrow \k \otimes A \cong
	A,\quad 
	1=1_\k \mapsto 1=1_A,
$$
that is, if 
$$
	\langle d_{(0)} , e_{(0)}
	\rangle 
	d_{(1)}e_{(1)}
	=
	\langle d, e \rangle 1_A
$$
holds for all $d,e \in V$.

In terms of the basis
$\{e_i\}$, the bilinear form 
$\langle -,- \rangle $ is
determined by the matrix 
$E \in M_N(\k)$ with entries $ \langle
e_i, e_j \rangle $ and 
is non-degenerate if and only
if 
$E \in GL_N(\k)$. Analysing
when it is $A$-colinear
yields:

\begin{lemma}\label{selfdual}
The comodule $V$ is self-dual if and only
if there exists $E \in
GL_N(\k)$ with 
$$
	v^{-1} = E^{-1} v^T E, 
$$
where $v \in M_N(A)$ is as in
(\ref{corep}). 
\end{lemma}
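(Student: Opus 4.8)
The plan is to unwind the colinearity condition in the chosen basis $\{e_1,\ldots,e_N\}$ and read it off as a matrix identity. First I would record that, since $\langle-,-\rangle$ is bilinear, it is completely determined by the matrix $E\in M_N(\k)$ with entries $E_{ij}=\langle e_i,e_j\rangle$, and that non-degeneracy of the form is precisely the statement $E\in GL_N(\k)$; this takes care of the ``$E$ invertible'' part of the claim and reduces everything to translating colinearity into a condition on $E$ and $v$.

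Next I would substitute $d=e_a$, $e=e_b$ into the colinearity identity $\langle d_{(0)},e_{(0)}\rangle\, d_{(1)}e_{(1)}=\langle d,e\rangle 1_A$. Using $\rho(e_a)=\sum_i e_i\otimes v_{ia}$ and $\rho(e_b)=\sum_j e_j\otimes v_{jb}$, the left-hand side becomes $\sum_{i,j}E_{ij}v_{ia}v_{jb}$, which is exactly the $(a,b)$-entry of the matrix product $v^{T}Ev\in M_N(A)$, where $E$ is regarded inside $M_N(A)$ via the unit of $A$. The right-hand side is $E_{ab}1_A$. Hence $A$-colinearity of $\langle-,-\rangle$ is equivalent to the single matrix equation $v^{T}Ev=E$ in $M_N(A)$.

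Finally I would invoke that $v$ is invertible with $(v^{-1})_{ij}=S(v_{ij})$, as recorded above. Multiplying $v^{T}Ev=E$ on the left by $E^{-1}$ gives $(E^{-1}v^{T}E)v=I$, whence $E^{-1}v^{T}E=v^{-1}$, which is the asserted formula; conversely $v^{-1}=E^{-1}v^{T}E$ rearranges back to $v^{T}Ev=E$, so the two conditions are equivalent. Running this backwards shows that any $E\in GL_N(\k)$ satisfying $v^{-1}=E^{-1}v^{T}E$ defines, via $\langle e_i,e_j\rangle:=E_{ij}$, a non-degenerate $A$-colinear bilinear form on $V$, which gives the remaining implication.

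There is no serious obstacle here; the only real care needed is in the placement of transposes and the ordering of indices in the matrix products, i.e.\ making sure one genuinely obtains $v^{T}Ev$ rather than $vEv^{T}$ or $v^{T}E^{T}v$. It is also worth noting in passing that symmetry or antisymmetry of $\langle-,-\rangle$ translates into $E^{T}=\pm E$; this is not needed for the present lemma but is the property that will be used later.
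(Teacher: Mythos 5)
Your proposal is correct and follows essentially the same route as the paper: expand the colinearity condition in the basis to get $v^TEv=E$ (the paper writes this entrywise as $E_{js}=\sum_{ir}E_{ir}v_{ij}v_{rs}$) and then use invertibility of $E$ and of $v$ to rearrange it into $v^{-1}=E^{-1}v^TE$. Packaging the computation as a single matrix identity and multiplying by $v^{-1}$ is a slightly cleaner presentation than the paper's explicit summations against $S(v_{sk})$ and $(E^{-1})_{lj}$, but it is the same argument.
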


\begin{proof}
Assume that 
$ \langle -, - \rangle $ is
any bilinear form on
$V$. 
In terms of the basis 
$\{e_j \otimes e_s\}$ of 
$V \otimes V$, applying
the $A$-coaction on 
$V \otimes V$ and then the map
$ \langle -,- \rangle
\otimes \mathrm{id} _A$ 
gives
$$
	e_j \otimes e_s 
	\mapsto  
	\sum_{ir} e_i \otimes e_r \otimes 
	v_{ij}v_{rs} 
	\mapsto 
	\sum_{ir} E_{ir}  
	v_{ij}v_{rs}.
$$
Applying instead  
$\langle -,- \rangle$ and then
the (trivial) coaction on 
$\k$ gives $E_{js}$, so
$ \langle -,- \rangle $ is
$A$-colinear if and only if
$$	
	E_{js} = 
	\sum_{ir} E_{ir}  
	v_{ij}v_{rs}
$$
holds for all $1 \le j,s \le N$. 

If this holds, then
multiplying by $S(v_{sk})$ from
the right and summing over $s$
yields 
$$
	\sum_s E_{js} S(v_{sk})= 
	\sum_{irs} E_{ir}  
	v_{ij}v_{rs} S(v_{sk}) 
	=
	\sum_i E_{ik}  
	v_{ij}.
$$
If $E$ is invertible,
multiplying from the left by 
$(E^{-1})_{lj}$ and summing
over $j$ finally yields
\begin{align*}
	(v^{-1})_{lk}&=
	S(v_{lk})=
	\sum_{sj} 
	(E^{-1})_{lj}	
	E_{js} S(v_{sk})\\
	&= 
	\sum_{ij} 
	(E^{-1})_{lj}
	E_{ik}  
	v_{ij} 
=
	(E^{-1}v^TE)_{lk}. 
\end{align*}
Conversely, if there is an
$E \in GL_N(\k)$ with this
property, simply 
define 
$ \langle -,- \rangle $ by
setting $ \langle e_i,e_j
\rangle := E_{ij}$ and then
the above shows that this
renders $V$ self-dual.
\end{proof}

\subsection{The Lie algebra
$ \mathfrak{g} _A$}
The dual vector
space $A'=\mathrm{Hom}
_\k(A,\k)$ is an algebra with 
respect to the convolution 
product 
$$
	(fg)(a):= f(a_{(1)})
	g(a_{(2)}),\quad
	f,g \in A',a \in A, 
$$
and the subspace 
$$
	\mathfrak{g} _A := 
	\{f \in A' \mid 
	f(ab) = \varepsilon (a) 
	f(b) + f(a) \varepsilon
	(b),\,\forall a,b \in A\}
$$
of \emph{primitive elements} in $A'$
is a Lie algebra with Lie
bracket given by the commutator 
$[f,g] := fg-gf$, 
for all $f,g \in \mathfrak{g} _A$.

The right $A$-comodule $V$ is
naturally a left $A'$-module 
via 
$$
	f \act e:=e_{(0)}
	f(e_{(1)}),\quad
	f \in A',e \in V. 
$$
As $A$ itself is also a right
$A$-comodule via $ \Delta $, 
$A$ becomes analogously
a left $A'$-module via 
$$
	f \act a:=a_{(1)}
	f(a_{(2)}),\quad
	f \in A',a \in A.
$$
In particular, this defines an
action of the Lie algebra
$ \mathfrak{g} _A$ of
primitive elements 
$f \in A'$ by $\k$-linear
derivations on $A$:
\begin{align}\label{deriv}
	f \act (ab) 
&= a_{(1)} b_{(1)} 
	f( a_{(2)} b_{(2)})
\nonumber\\
&= a_{(1)} b_{(1)} 
	(\varepsilon (a_{(2)}) 
	f(b_{(2)}) + 
	f(a_{(2)}) \varepsilon (
	b_{(2)} ))\\
&=  
	a (f \act b) + 
	(f \act a) b.\nonumber
\end{align}

\subsection{Hochschild
(co)homology}
We denote by  
\begin{align*}
b_n &\colon A^{\otimes n+1}
	\rightarrow A^{\otimes n}\\
\beta_n &\colon \mathrm{Hom}
_\k(A^{\otimes n},A)
\rightarrow \mathrm{Hom}
_\k(A^{\otimes n+1},A)
\end{align*}
the Hochschild (co)boundary
maps of the algebra $A$ and
by 
$$
	HH_n(A) := \mathrm{ker}\,
	b_n/\mathrm{im}\, b_{n+1},
	\quad
	H^n(A,A) := \mathrm{ker}\,
\beta _n/\mathrm{im}\, \beta
_{n-1}
$$ 
the Hochschild
(co)homology of $A$ with
coefficients in $A$. In particular,
an $\k$-linear derivation of
$A$ is the same as a
Hochschild 1-cocycle, so by
(\ref{deriv}), 
the action of 
primitive elements $f \in
\mathfrak{g} _A$ on $A$
defines a linear map 
$
	\mathfrak{g} _A 
	\rightarrow H^1(A,A). 
$

Recall finally that there are
well-defined cup and cap
products
(see 
e.g.~\cite[Section XI.6]{cartaneilenberg})
\begin{align*}
	\smallsmile 
&\colon
	H^i(A,A) \times 
	H^j(A,A) \rightarrow 
	H^{i+j} (A,A),\\
	\smallfrown 
&\colon
	HH_i(A) \times 
	H^j(A,A) \rightarrow 
	HH_{i-j} (A)
\end{align*}
which at the level of
(co)cycles are given by 
$$
	(\varphi \smallsmile
	\psi) 
	(a_1,\ldots,a_i,b_1,
	\ldots,
	b_j) 
= 
	\varphi(a_1,\ldots,
	a_i) \psi(b_1,
	\ldots,b_j)
$$
and
$$
	(a_0 \otimes \cdots 
	\otimes a_i) \smallfrown 
	\varphi = 
	a_0 \varphi (a_1,\ldots,
	a_j) \otimes a_{j+1}  
	\otimes \cdots 
	\otimes a_i,
$$
and that the cup product is
graded commutative, that is,
for all $[ \varphi ] \in
H^i(A,A), [\psi] \in
H^j(A,A)$, 
\begin{equation}\label{supercommu}
	[ \varphi ] \smallsmile 
	[\psi ] = (-1)^{ij}
	[ \psi ] \smallsmile 
	[\varphi ].
\end{equation}

\section{Proof of the theorem}
In this section we prove the
main theorem. We construct
explicitly a suitable
Hochschild 3-cycle on a Hopf
algebra $A$ and  then show
that it is non-trivial by
pairing it with the Lie algebra of primitive elements in the dual Hopf algebra $A'$.

\subsection{The Hochschild 
3-cycle $ c_V$}
The starting point of the proof of the main result of this 
paper is the following remark
which we expect to be well known
to experts:

\begin{lemma}\label{natural}
Assume $(V, \langle -,- \rangle )$
is a self-dual comodule over $A$. 
If 
$ \langle -,- \rangle $ is 
symmetric or
antisymmetric, then
$$
	c_V:=
	\sum_{ijkl}
	(v^{-1})_{ji} \otimes 
	v_{ik} \otimes 
	(v^{-1})_{kl} \otimes 
	v_{lj} 
	+
	\sum_{ij} 
	1 \otimes v_{ij} 
	\otimes 1 \otimes
(v^{-1})_{ji} 
	\in 
	A^{\otimes 4}
$$
is a Hochschild 3-cycle, 
i.e.,  $b_3 c_V = 0$. 
If $V$ is
simple, then the converse
implication holds as
well. 
\end{lemma}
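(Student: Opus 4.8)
The plan is to compute the Hochschild boundary $b_3 c_V \in A^{\otimes 3}$ directly. Expanding it by means of
$$
b_3(a_0 \otimes a_1 \otimes a_2 \otimes a_3) = a_0 a_1 \otimes a_2 \otimes a_3 - a_0 \otimes a_1 a_2 \otimes a_3 + a_0 \otimes a_1 \otimes a_2 a_3 - a_3 a_0 \otimes a_1 \otimes a_2
$$
produces eight terms; in the four coming from the first summand of $c_V$ one of the internal products is $v v^{-1}$ or $v^{-1} v$ and collapses to a Kronecker delta by the relations $\sum_k v_{ik}(v^{-1})_{kj} = \sum_k (v^{-1})_{ik} v_{kj} = \delta_{ij}$. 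Cancelling all matching pairs of terms and relabelling each remaining sum by a single pair of indices leaves
$$
b_3 c_V = \sum_{ij}\big(1 \otimes (v^{-1})_{ij} \otimes v_{ji} - 1 \otimes v_{ij} \otimes (v^{-1})_{ji}\big) + \sum_{ij}\big(v_{ij} \otimes 1 \otimes (v^{-1})_{ji} - (v^{-1})_{ij} \otimes 1 \otimes v_{ji}\big).
$$

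Hence $b_3 c_V = 0$ will follow from the single identity in $A \otimes A$
$$
\sum_{ij}(v^{-1})_{ij} \otimes v_{ji} = \sum_{ij} v_{ij} \otimes (v^{-1})_{ji},
$$
because each of the two summands in the above formula for $b_3 c_V$ is $\pm$ the difference of the two sides of this identity with a factor $1_A$ inserted. To prove the identity I would invoke Lemma~\ref{selfdual}: self-duality of $V$ supplies $E \in GL_N(\k)$ with $v^{-1} = E^{-1} v^T E$, and symmetry or antisymmetry of $\langle -,- \rangle$ says that $E^T = \pm E$. Transposing $v^{-1} = E^{-1} v^T E$ and feeding in $E^T = \pm E$ for both $E$ and $E^{-1}$ — the two signs cancelling — gives $(v^{-1})^T = E v E^{-1}$; this transposition is the one place where the pairing being symmetric or antisymmetric, rather than an arbitrary non-degenerate invariant form, is used. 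Substituting the resulting expression for $(v^{-1})_{ij}$ into the left-hand side of the identity and carrying out the summations over $i$ and $j$, which just recontract $E$ and $E^{-1}$ into $E v E^{-1} = (v^{-1})^T$, turns it into the right-hand side. (As a sanity check one can verify the identity by hand for $A = \cO(\mathrm{SL}_2)$ with $V$ the standard comodule.) This settles the forward implication.

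For the converse, suppose $V$ is simple. The formula above reads $b_3 c_V = \sum_p (1 \otimes x_p - x_p \otimes 1) \otimes y_p$, where $\xi := \sum_{ij}\big((v^{-1})_{ij} \otimes v_{ji} - v_{ij} \otimes (v^{-1})_{ji}\big) = \sum_p x_p \otimes y_p$; applying $\varepsilon$ in the first tensor factor then shows that $b_3 c_V = 0$ forces $\xi \in \k 1_A \otimes A$. Now $\{v_{ij}\}$ spans the coefficient coalgebra $C(V) \subseteq A$, which is simple as $V$ is, and $\{(v^{-1})_{ij}\} = \{S(v_{ij})\}$ spans the simple subcoalgebra $C(V^*) = S(C(V))$. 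If $V$ were not self-dual then $C(V) \neq C(V^*)$, so $C(V)$, $C(V^*)$ and — when $V$ is moreover nontrivial — $\k 1_A$ are pairwise distinct summands of the coradical of $A$, hence independent in $A$. Writing $\mu := \sum_{ij}(v^{-1})_{ij} \otimes v_{ji}$ and $\nu := \sum_{ij} v_{ij} \otimes (v^{-1})_{ji}$, we have $\mu = \nu + \xi$ with the first tensor legs of $\nu$ in $C(V)$ and those of $\xi$ in $\k 1_A$; applying the projection of $A$ onto its $C(V^*)$-summand in the first tensor factor annihilates $\nu$ and $\xi$ while fixing $\mu$, so $\mu = 0$. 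But $\mu \neq 0$, since $(\mathrm{id} \otimes \varepsilon)(\mu) = \tr(v^{-1})$ and the counit of $\tr(v^{-1})$ equals $\dim V \neq 0$. This contradiction forces $V \cong V^*$; finally, a simple self-dual comodule carries, by Schur's lemma, an invariant bilinear form unique up to a scalar, and its transpose is again such a form, so the form is symmetric or antisymmetric.

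The whole difficulty of the forward direction is bookkeeping — making sure exactly the right matching pairs among the eight terms cancel and that the index relabellings are consistent — while the real content is the reformulation as the displayed identity together with the observation that this identity is exactly what self-duality with a symmetric or antisymmetric form yields after a single transposition. In the converse the only extra input is the coradical-theoretic fact that distinct simple subcoalgebras of $A$, and the tensor products they span inside $A \otimes A$, are independent.
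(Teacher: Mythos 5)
Your forward direction is correct and takes essentially the paper's route: expand $b_3 c_V$, observe that everything reduces to the single identity $\sum_{ij}(v^{-1})_{ij}\otimes v_{ji}=\sum_{ij}v_{ij}\otimes(v^{-1})_{ji}$, and deduce that identity from Lemma~\ref{selfdual} together with $E^T=\pm E$, the two signs cancelling under transposition exactly as you say. In fact your displayed formula for $b_3 c_V$ is the complete one: it retains the summand $\sum_{ij}\bigl(v_{ij}\otimes 1\otimes (v^{-1})_{ji}-(v^{-1})_{ij}\otimes 1\otimes v_{ji}\bigr)$, which the paper's displayed expression drops, and since that summand vanishes for the same reason, your bookkeeping is if anything more careful.

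The converse, however, has a genuine gap. You spend most of the argument proving that $V$ is self-dual, but self-duality is a standing hypothesis of the lemma; the converse to be proved is only that the \emph{given} form is symmetric or antisymmetric. The one sentence in which you address that conclusion --- ``a simple self-dual comodule carries, by Schur's lemma, an invariant bilinear form unique up to a scalar, and its transpose is again such a form'' --- rests on a false principle: the flip $V\otimes V\to V\otimes V$ is not a morphism of $A$-comodules when $A$ is noncommutative (the tensor coaction produces $d_{(1)}e_{(1)}$, the flipped one $e_{(1)}d_{(1)}$), so the transpose of an invariant form need not be invariant; in matrix terms $E=v^TEv$ does not imply $E^T=v^TE^Tv$. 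The fundamental comodule of $\cO_q(SL_2)$ is simple and self-dual, yet its invariant form satisfies $E^T=\lambda E$ for no scalar $\lambda$ when $q^2\neq 1$; this is precisely the kind of example the converse clause is designed to detect, and your argument would wrongly declare every such form $\pm$-symmetric (indeed, if your principle held, the converse clause would be vacuous). The missing step is to use simplicity directly: from $b_3c_V=0$ you correctly get $\xi\in\k 1_A\otimes A$; substituting $v^{-1}=E^{-1}v^TE$ rewrites $\xi$ as $\sum_{ij}v_{ij}\otimes\bigl((EvE^{-1})_{ij}-(E^Tv(E^{-1})^T)_{ij}\bigr)$, and the linear independence of the $v_{ij}$ (Jacobson density for simple $V$) forces $EvE^{-1}=E^Tv(E^{-1})^T$, i.e.\ $E^{-1}E^T$ commutes with $v$, hence is a scalar matrix, hence $E^T=\pm E$.
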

\begin{proof}
It is straightforward to 
verify that 
$$
	b_3 c_V = 
	\sum_{ij} 
	1 \otimes 
	\bigl((v^{-1})_{ij}  
	\otimes v_{ji}  - 
	v_{ij}  
	\otimes
(v^{-1})_{ji}\bigr), 
$$
and Lemma~\ref{selfdual} yields 
$$
	b_3 c_V = \sum_{ijsr} 
	1 \otimes 
	v_{ij}
	\otimes 
	\bigl(
	E_{ir}  
	v_{rs}  
	E^{-1}_{sj} 
	- 
	E^T_{ir}
	v_{rs} 
	(E^{-1})^T_{sj} 
	\bigr) ,
$$
which vanishes if 
$E^T=\pm E$.  
If $V$ is simple, then 
the $v_{ij}$ are linearly
independent (by the Jacobson
density theorem) and the above
computation shows first that 
$$	
	EvE^{-1}
	=
	E^Tv
	(E^{-1})^T 
	\Leftrightarrow	
	E^{-1}E^T v = 
	v E^{-1} E^T. 
$$
Again by the Jacobson density
theorem and the fact that the
only matrices commuting with
all others are scalar
multiples of the identity
matrix, this implies that $E^{-1}E^T$
is a constant, so
$E^T=\lambda E$ for some 
$ \lambda \in \k$ which is
necessarily $\pm 1$. 
\end{proof}

\subsection{The cap product
$ 
c_V \smallfrown \varphi$}
Let us take any 
$f_1,f_2,f_3 \in  
\mathfrak{g} _A$, 
i.e.~primitive elements of $A'$, and
let $\varphi$ 
be the cup product of the
associated derivations of $A$,
$$
	\varphi \colon 
	A^{\otimes 3} \rightarrow A,
	\quad
	a_1 \otimes a_2 \otimes a_3
	\mapsto 
	(f_1 \act a_1) 
	(f_2 \act a_2)
	(f_3 \act a_3).
$$
We now show that the cap product
between $c_V$ and 
$\varphi$ is a scalar multiple
of the identity $1_A$:

\begin{lemma}\label{cappro}
Let $F_i \colon V \rightarrow 
V,e \mapsto f_i\act e=e_{(0)}
f_i(e_{(1)})$ be the linear
map defined by the action 
of $f_i$, $i=1,2,3$. Then,
$$
	c_V 
	\smallfrown \varphi
	= -\tr(F_1F_2F_3). 
$$
\end{lemma}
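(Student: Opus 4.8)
The plan is to compute $c_V \smallfrown \varphi$ directly from the definitions. Since $c_V \in A^{\otimes 4}$ corresponds to the homological degree $i = 3$ and $\varphi$ has cohomological degree $j = 3$, the general cap product formula degenerates to $(a_0 \otimes a_1 \otimes a_2 \otimes a_3) \smallfrown \varphi = a_0\,\varphi(a_1, a_2, a_3) \in A$. So $c_V \smallfrown \varphi$ is an honest element of $A$, and the claim is that it lies in $\k \cdot 1_A$ and equals $-\tr(F_1F_2F_3)$ there.

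The first preparatory step is to record how the derivations $f_i \act (-)$ act on the entries of $v$ and of $v^{-1}$. From $\Delta(v_{kl}) = \sum_m v_{km} \otimes v_{ml}$ one gets $f \act v_{kl} = \sum_m v_{km} f(v_{ml})$, i.e.\ in matrix form $f \act v = vF$, where $F \in M_N(\k)$ has entries $F_{ml} = f(v_{ml})$; this $F$ is exactly the matrix of the operator appearing in the statement, since $F(e_l) = f \act e_l = \sum_k e_k f(v_{kl})$. Applying the derivation $f \act (-)$ to the identity $\sum_k v_{ik}(v^{-1})_{kj} = \delta_{ij}1_A$ and using (\ref{deriv}) together with $f(1_A) = 0$ gives $f \act v^{-1} = -v^{-1}(f \act v)v^{-1} = -Fv^{-1}$.

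Then I substitute these into the two summands of $c_V$. For the first summand $\sum_{ijkl}(v^{-1})_{ji} \otimes v_{ik} \otimes (v^{-1})_{kl} \otimes v_{lj}$, applying $\varphi$ and the identities above turns each of the three inner factors into a matrix product with a scalar $F_i$-block, and the resulting sum of products in $A$ has each $v$-block adjacent to a $v^{-1}$-block sharing a summation index, so that $v^{-1}v = I$ telescopes everything: what survives is precisely $-\tr(F_1F_2F_3)\,1_A$ (the minus sign coming from $f_2 \act v^{-1} = -F_2 v^{-1}$). For the second summand $\sum_{ij} 1 \otimes v_{ij} \otimes 1 \otimes (v^{-1})_{ji}$, the middle slot of $\varphi$ is fed the unit, and $f_2 \act 1_A = 1_A f_2(1_A) = 0$ annihilates the whole term. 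Adding the two contributions yields the claimed formula, after identifying $\k$ with $\k \cdot 1_A \subseteq A$ inside $HH_0(A) = A/[A,A]$.

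The computation is essentially mechanical; the only thing requiring care is the index bookkeeping in the noncommutative algebra $A$ — one must keep the four $A$-valued factors in the correct order and verify that the pattern of indices built into the definition of $c_V$ is exactly the one for which the collapses $v^{-1}v = I$ occur between adjacent factors. (Had $c_V$ been written with a different interleaving of the $v_{ij}$ and $(v^{-1})_{kl}$, nothing would telescope and no trace would appear.) This is the main, if modest, obstacle.
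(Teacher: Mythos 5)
Your proposal is correct and follows essentially the same route as the paper: apply the degenerate cap product formula, use the Leibniz rule to get $f\act v = vF$ and $f\act v^{-1} = -Fv^{-1}$, kill the second summand via $f_2\act 1_A=0$, and telescope the first summand using $v^{-1}v=I$ to obtain $-\tr(F_1F_2F_3)$. No gaps.
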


\begin{proof}
If $ \partial \colon A
\rightarrow A$ is any
derivation and $v \in
GL_N(A)$, then the Leibniz
rule implies 
$$
	\partial(v^{-1}_{rk}) =
	-\sum_{ij} v^{-1}_{ri} 
	\partial (v_{ij})
v^{-1}_{jk}
$$
and of course $ \partial
(1)=0$. 
Thus 
\begin{align*}
	c_V
	\smallfrown \varphi
&=	\sum_{ijkl} 
	v^{-1}_{ji}
	(f_1
	\act v_{ik}) 
	(f _2 
	\act v^{-1}_{kl}) 
	(f _3 \act v_{lj}) \\
&=	-\sum_{ijklmn} 
	v^{-1}_{ji}
	(f_1 \act
	v_{ik})v^{-1}_{km} 
	(f _2 \act
	v_{mn}) 
	v^{-1} _{nl}
	(f _3 \act v_{lj}) \\
&=	-\sum_{ijklmnpqr} 
	v^{-1}_{ji}
	v_{ip} F_{1,pk}
	v^{-1}_{km} 
	v_{mq} F_{2,qn}
	v^{-1} _{nl}
	v_{lr} F_{3,rj} \\
&=	-\sum_{jkn} 
	F_{1,jk}
	F_{2,kn}
	F_{3,nj}.\qedhere
\end{align*}
\end{proof}

\subsection{Evaluation in 
$ \varepsilon $}
The following is true for any
algebra that admits a
1-dimensional representation:

\begin{lemma}\label{trivia}
The 0-cycle 
$1 \in A$ has a non-trivial
class in $HH_0(A) = A/[A,A]$.
\end{lemma}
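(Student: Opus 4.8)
The plan is to exploit a one-dimensional representation directly. Recall that a one-dimensional representation of $A$ is nothing but an algebra homomorphism $\chi \colon A \to \k$; in the Hopf algebra setting the counit $\varepsilon$ is such a map, so the hypothesis is automatically satisfied in all situations relevant to the theorem. First I would observe that, since the target $\k$ is commutative, $\chi(ab) = \chi(a)\chi(b) = \chi(b)\chi(a) = \chi(ba)$ for all $a,b \in A$, so $\chi$ annihilates every commutator and hence the whole subspace $[A,A]$. Consequently $\chi$ factors through the quotient, yielding a well-defined $\k$-linear functional $\bar\chi \colon HH_0(A) = A/[A,A] \to \k$.

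The conclusion is then immediate: $\chi$ is unital, so $\bar\chi([1]) = \chi(1_A) = 1_\k \neq 0$, and therefore the class $[1] \in HH_0(A)$ cannot be zero. For later use one fixes $\chi = \varepsilon$, so that $\bar\varepsilon \colon HH_0(A) \to \k$ is the map induced by the counit and $\bar\varepsilon([1]) = 1$.

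There is no genuine obstacle here: the entire argument rests only on the existence of a single algebra character $A \to \k$, which every Hopf algebra supplies via its counit. The one point worth recording explicitly is the factorisation through $A/[A,A]$, since it is this induced functional $\bar\varepsilon$ that will be used to detect non-triviality of the three-cycle $c_V$ after capping with $\varphi$ in the proof of the main theorem.
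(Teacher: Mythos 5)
Your argument is correct and is exactly the paper's proof: the counit (as an algebra character into the commutative field $\k$) kills all commutators, hence descends to $A/[A,A]$, and sends $1_A$ to $1_\k\neq 0$. The only difference is that you spell out the factorisation through the quotient, which the paper leaves implicit.
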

\begin{proof}
The counit $ \varepsilon $ 
inevitably vanishes on
all commutators but maps $1_A$
to $1_\k$. 
\end{proof}

\subsection{The Casimir
operator}
In view of Lemma~\ref{trivia},
Lemma~\ref{cappro} implies
$[c_V] \neq 0$ as long as
there are $f_1,f_2,f_3 \in
\mathfrak{g} _A$ with 
$\tr(F_1F_2F_3) \neq 0$. 

This is in particular the case
when $A$ admits a Hopf algebra
map to the coordinate Hopf
algebra of a semisimple
algebraic group $G$ which acts
non-trivially on $V$:
using the graded commutativity 
(\ref{supercommu}) of 
$\smallsmile$ we observe that 
$$
	\tr(F_1 [F_2,F_3]) = 
	\tr(F_1F_2F_3) -
	\tr(F_1F_3F_2) = 2
	\tr(F_1F_2F_3).
$$
Now recall that if $
\mathfrak{g} $ is the Lie
algebra of $G$, then as
$G$ and hence $ \mathfrak{g} $
are semisimple, 
$[\mathfrak{g} ,\mathfrak{g}
]= \mathfrak{g} $ and,
therefore, the (quadratic) 
Casimir operator $ \casimir $
of $ \mathfrak{g}  $ can be
expressed as a finite sum
$$
\casimir = \sum_{m=1}^M 
f_{m1}[f_{m2},f_{m3}], \qquad
f_{mi} \in \mathfrak{g} .
$$
Under the map
$ \pi^* \colon \mathfrak{g}
\rightarrow \mathfrak{g} _A$
dual to $ \pi $ these $f_{mi}$ yield 
primitive elements in $
\mathfrak{g} _A$ and hence
classes $ [\varphi] \in
H^3(A,A) $ 
which add up to a class
whose pairing with $ [c_V]$ is 
$-\frac{1}{2} \tr(\casimir)$. If $G$ acts
non-trivially on $V$, this is 
non-zero, so $[c_V] \neq 0$.

\subsection{The class $ \pi
_*([c_V])$}\label{masoud}
Following a 
suggestion by 
M.~Khalkhali, we end with a
brief historical account on
the role that
$ \pi _*([c_V]) \in HH_3(
\cO(G))$ 
plays in the 
cohomology of algebraic groups and Lie
algebras. For more details, we
refer to Samelson's survey
\cite{samelson}.
 
As we work
over a field of characteristic
0, a semisimple algebraic
group $G$ is a smooth and
connected affine
variety, and the
Hochschild-Kostant-Rosenberg
isomorphism \cite{hkr}
$$ 
	HH_n(\cO(G)) \cong 
	\Omega^n (G)
$$
identifies $ \pi_*([c_V])$ 
with the (K\"ahler) differential 
3-form 
\begin{equation}\label{omegav}
	\omega _V := 
	\sum_{ijkl} 
	g^{-1}_{ji} 
	\dd g_{ik} 
	\dd g^{-1}_{kl} 
	\dd g_{lj} \in \Omega ^3(G) 
\end{equation}
on $G$; here $g_{ij } := 
\pi (v_{ij}) \in \cO(G)$ are the matrix
coefficients of the
representation $ G \rightarrow 
GL(V) \cong GL_N(\mathbb{F})$ 
defined by $ \pi $ and
$ \rho $. 
Note that 
$ 
	\dd g_{ij}^{-1} = \sum_{rs} 
	g_{ir}^{-1} (\dd g_{rs})
	g^{-1}_{sj}$ and that this
implies 
$ \dd \omega _V= 0 $.
When $ \pi =
\mathrm{id}_{\cO(G)} $, 
$V = \mathfrak{g} $, and $
\rho $ is the adjoint
representation of $G$, then 
$ \langle -,- \rangle $ can be
taken to be the Killing form.
Thus every semisimple
algebraic group
$G$ over a field of
characteristic 0 
comes equipped with a
canonical de Rham cohomology 
class $[ \omega _
\mathfrak{g} ] \in HdR^3(G)$.

One of the main results of 
Chevalley and Eilenberg's
seminal paper \cite{ce} was
that this cohomology class is
non-trivial. 
Hopf had shown in \cite{hopf}
that the de Rham 
cohomology of a compact
and connected Lie group is
that of a product of
odd-dimensional spheres
$S^{m_i}$, and for the
classical matrix groups the
$m_i$ had been already known
earlier. 
Chevalley and
Eilenberg then fully
implemented an idea that goes
back to Cartan: 
the cotangent bundle of
an algebraic group $G$ admits a
natural trivialisation, 
$
	T^* G \cong G \times
	\mathfrak{g} '. 
$
From a
Hopf-algebraist's perspective,
this stems from the fact
that $\Omega^1 (G)$ and hence
also $\Omega(G) = \Lambda
_{\cO(G)} \Omega^1 (G)$ is  
a Hopf module over 
$\cO(G)$, hence by the
fundamental theorem of Hopf
modules \cite[Theorem~4.1.1]{sweedler},
$\Omega (G) $
is a free $\cO(G)$-module 
with a basis given by
the elements that are invariant
under the $\cO(G)$-coaction.
Geometrically, this coaction 
is the
$G$-action on differential
forms given by right
translation, hence the basis
elements are the right-invariant
differential forms. By
evaluation in the unit element 
$e \in G$, these become
identified with elements of 
the exterior algebra 
$ \Lambda _\mathbb{F}
\mathfrak{g} '$ of the dual of
the Lie algebra $ \mathfrak{g}
$. The de Rham
differential is
$G$-equivariant, hence
restricts to the
right-invariant differential
forms, and under the
isomorphism becomes the
Chevalley-Eilenberg
differential on $ \Lambda
_\mathbb{F} \mathfrak{g} '$
that computes the Lie algebra
cohomology $ H( \mathfrak{g} ,
\mathbb{F})$ \cite[Theorem
9.1]{ce}.
Furthermore, the differential
forms which are not just right- but
also left-invariant become
identified with the 
$ \mathrm{ad}$-invariant
cochains in the
Chevalley-Eilenberg complex,
and on these the coboundary
map is trivial
\cite[(19.2)]{ce}.  
As $G$ is reductive, this
subcomplex of biinvariant
differential forms is actually
quasiisomorphic to the de Rham
complex, so the de Rham
cohomology of $G$ can be
identified with the 
algebra of
$\mathrm{ad}$-invariant
Chevalley-Eilenberg cochains. 
If we consider compact Lie groups 
over $\mathbb{F} =
\mathbb{R}$, then the 
statements carry over to 
smooth funtions and
differential forms and the de
Rham complex is quasiisomorphic to the
subcomplex of biinvariant differential
forms, which are
automatically closed
\cite[(12.3)]{ce}.  
 
Our form $ \omega _V$ (and in
fact the Hochschild cycle $c_V$)  
is manifestly biinvariant: replacing 
the function $g_{ij} \in
\cO(G)$ by $ \sum_r g_{ir} 
t_{rj} $ or $\sum_s t_{is}
g_{sj}$ for a constant matrix 
$T \in GL_N( \mathbb{F})$
with entries $t_{ij}$
yields the same form 
$ \omega _V$. In our proof
above, we have applied 
$ \pi ^* \colon \mathfrak{g}
\rightarrow \mathfrak{g}_A$ 
to Lie algebra elements 
$f_j \in \mathfrak{g} $
and
then computed the pairing 
of $ [\varphi] $ with $[c_V]$.
By very definition, this
amounts to pairing the
corresponding Hochschild
3-cocycle on $\cO(G)$ with 
$ \pi _*( [c_V])$, and under
the
Hochschild-Kostant-Roenberg
isomorphism, this Hochschild
3-cocycle is the
right-invariant multivector field
$f_1 \wedge f_2 \wedge f_3$ on
$G$. 
That is, our
computation can indeed 
be reinterpreted in
terms of Lie algebra
cohomology as the evaluation
of the Chevalley-Eilenberg
cocycle 
\begin{equation}\label{cecocycle}
	\chi _V \colon 
	\mathfrak{g} \otimes_\mathbb{F} 
	\mathfrak{g} \otimes_\mathbb{F} 
	\mathfrak{g} \rightarrow 
	\mathbb{F},\quad
	f_1 \otimes 
	f_2 \otimes 
	f_3 \mapsto 
	\tr(F_1F_2F_3),
\end{equation}
where 
$$
	F_j := (\dd_e\rho) (f_j)
\in \mathfrak{gl}(V) \cong
	M_N( \mathbb{F})
$$
are the values of the $f_j$
under the representation 
$ \mathfrak{g} \rightarrow 
\mathfrak{gl} (V) \cong 
M_N( \mathbb{F})$
corresponding to the
representation $ G \rightarrow 
GL(V)$. 
 
Note that our assumptions on
$V$ enter the fact that 
$ \chi _V$ is a 3-cocycle: 

\begin{lemma}
If $E \in GL_N(\mathbb{F})$ 
is an invertible matrix 
and $ \mathfrak{g} $ is a Lie
subalgebra of 
$\{ 
F \in M_N(\mathbb{F}) \mid 
F^T = - E F E^{-1}\}$, then 
$$
	\chi \colon 
	\mathfrak{g} \otimes
_\mathbb{F} 
	\mathfrak{g} \otimes
_\mathbb{F} 
	\mathfrak{g} \to \mathbb{F},\quad 
	\chi (F_1,F_2,F_3):=
	\tr(F_1F_2F_3)
$$
is an 
$\mathrm{ad}$-invariant 
cocycle in 
$ C^3( \mathfrak{g} ,
\mathbb{F}) = \Lambda _
\mathbb{F} \mathfrak{g} '$. 
\end{lemma}
\begin{proof}
That $ \chi $ is an
alternating 3-form is seen as
follows: 
\begin{align*}
	\tr ( F_1 F_2 F_3  )  
&= 
	\tr ( F_3^T F_2^T  F_1^T)
	= 
	-\tr (EF_3F_2F_1E^{-1}) 
\\
&=  
	-\tr ( F_3 F_2  F_1) = 
	-\tr (F_2 F_1 F_3).
\end{align*}
That $ \chi $ satisfies the cocycle
condiion 
\begin{align*}
	0 &=
	-\chi ([F_1,F_2],F_3,F_4) 
	+\chi ([F_1,F_3],F_2,F_4) 
	-\chi ([F_1,F_4],F_2,F_3)
\\
& 
	-\chi ([F_2,F_3],F_1,F_4) 
	+\chi ([F_2,F_4],F_1,F_3) 
	-\chi ([F_3,F_4],F_1,F_2) 
\end{align*}
is shown similiarly. The 
$\mathrm{ad}$-invariance is
immedate.
\end{proof}

In this way, the condition on 
$V$ to carry a 
non-degenerate symmtric or
antismmetric invariant
bilinear form can also be
motivated from the point of
view of Lie algebra
cohomology.



\section*{Acknowledgements}
We would also like to
thank M.~Khalkhali, who in his role
as editor has suggested to
add the final Section~\ref{masoud}.
The results reported here were
obtained during the authors' stay
at ICMS Edinburgh within the
Research in Groups Programme in June 2022.
We would like to thank the
International Centre for Mathematical Sciences for financial and
administrative support. The
research of T.\ Brzezi\'nski
is supported in part by the
National Science Centre,
Poland, grant no.\
2019/35/B/ST1/01115.
U.~Kr\"ahmer is supported
by the DFG grant
``Cocommutative comonoids''
(KR 5036/2-1). 
R.~\'O Buachalla is supported
by the Charles University
PRIMUS grant ``Spectral
Noncommutative Geometry of
Quantum Flag Manifolds'' 
PRIMUS/21/SCI/026.
K.R.\ Strung is supported by
GA\v CR project 20-17488Y and
RVO: 6798584.

\begin{bibdiv}
\begin{biblist}
\bibitem{BanSpe:lib} T.\ Banica and R.\ Speicher, {\em Liberation of orthogonal Lie groups}, Adv.\
Math.\ {\bf 222} (2009), 1461--1501. 

\bibitem{BicDub:hal} J.\ Bichon and M.\ Dubois-Violette, {\em Half-commutative orthogonal Hopf algebras}, Pacific J.\ Math.\ {\bf 263} (2013), 13--28.

\bib{BicDub:gro}{article}{
   author={Bichon, Julien},
   author={Dubois-Violette, Michel},
   title={The quantum group of a preregular multilinear form},
   journal={Lett. Math. Phys.},
   volume={103},
   date={2013},
   number={4},
   pages={455--468},
   issn={0377-9017},
   review={\MR{3029329}},
   doi={10.1007/s11005-012-0603-4},
}

\bib{cartaneilenberg}{book}{
   author={Cartan, H.},
   author={Eilenberg, S.},
   title={Homological algebra},
   series={Princeton Landmarks in Mathematics},
   note={With an appendix by D.A.~Buchsbaum;
   Reprint of the 1956 original},
   publisher={Princeton University Press, Princeton, NJ},
   date={1999},
   pages={xvi+390},
   isbn={0-691-04991-2},
   review={\MR{1731415}},
}

\bibitem{ce}
C.\ Chevalley and S.\ Eilenberg, {\em Cohomology theory of Lie groups and Lie algebra,}
Trans.\ Amer.\ Math.\ Soc.\ {\bf 63} (1948), 85--124.

\bib{CHT}{article}{
   author={Collins, Beno\^{\i}t},
   author={H\"{a}rtel, Johannes},
   author={Thom, Andreas},
   title={Homology of free quantum groups},
   language={English, with English and French summaries},
   journal={C. R. Math. Acad. Sci. Paris},
   volume={347},
   date={2009},
   number={5-6},
   pages={271--276},
   issn={1631-073X},
   review={\MR{2537535}},
   doi={10.1016/j.crma.2009.01.021},
}

\bibitem{DubLau:deg}
M.\ Dubois-Violette and G.\ Launer, {\em The quantum group of a non-degenerate bilinear form}, Phys.\ Lett.\ B {\bf 245} (1990), 175--177.

\bibitem{hkr}
G.\ Hochschild, B.\ Kostant and A.\ Rosenberg, {\em Differential forms on regular affine algebras},
Trans.\ Amer.\ Math.\ Soc.\ {\bf 102 }(1962), 383--408.

\bibitem{hopf}
H.\ Hopf, {\em 
Über die Topologie der Gruppen-Mannigfaltigkeiten und ihre Verallgemeinerungen}, 
Ann.\ Math.\ (2) {\bf 42} (1941), 22--52.

\bibitem{samelson} 
H.\ Samelson,  {\em Topology of Lie groups}, Bull.\ Amer.\ Math.\ Soc.\  {\bf 58} (1952), 2--37.  

\bib{sweedler}{book}{
   author={Sweedler,  M.E.},
   title={Hopf algebras},
   series={Mathematics Lecture Note Series},
   publisher={W. A. Benjamin, Inc., New York},
   date={1969},
   pages={vii+336},
   review={\MR{0252485}},
}

\bibitem{Van:rel} M.\ Van den Bergh, {\em A relation between Hochschild homology and cohomology for Gorenstein rings}, Proc.\
Amer.\hspace{3pt}Math.\ Soc.\hspace{3pt}{\bf 126} (1998), 1345--1348; Erratum: Proc.\hspace{3pt}Amer.\hspace{3pt}Math.\hspace{3pt}Soc.\hspace{3pt}{\bf 130} (2002), 2809--2810.

\bibitem{Wan:fre} S.\ Wang, {\em Free products of compact quantum groups},  Comm.~Math.~Phys. {\bf 167} (1995), 671--692.

\bibitem{Wor:com} S.L.\hspace{3pt}Woronowicz, {\em Compact matrix pseudogroups}, Comm.~Math.~Phys. {\bf 111} (1987), 613--665.
\end{biblist}
\end{bibdiv}
\end{document}